\documentclass[10pt]{amsart}
\usepackage{amssymb,MnSymbol}
\usepackage{amsthm,amsmath}

\title[]{Subsignatures of systems}

\author{Jean-Luc Marichal}
\address{Mathematics Research Unit, FSTC, University of Luxembourg, 6, rue Coudenhove-Kalergi, L-1359 Luxembourg, Luxembourg}
\email{jean-luc.marichal[at]uni.lu}

\date{October 3, 2013}

\begin{document}

\theoremstyle{plain}
\newtheorem{theorem}{Theorem}
\newtheorem{lemma}[theorem]{Lemma}
\newtheorem{proposition}[theorem]{Proposition}
\newtheorem{corollary}[theorem]{Corollary}
\newtheorem{fact}[theorem]{Fact}
\newtheorem*{main}{Main Theorem}

\theoremstyle{definition}
\newtheorem{definition}[theorem]{Definition}
\newtheorem{example}[theorem]{Example}

\theoremstyle{remark}
\newtheorem*{conjecture}{onjecture}
\newtheorem{remark}{Remark}
\newtheorem{claim}{Claim}

\newcommand{\N}{\mathbb{N}}
\newcommand{\R}{\mathbb{R}}
\newcommand{\Q}{\mathbb{Q}}
\newcommand{\Vspace}{\vspace{2ex}}
\newcommand{\bfx}{\mathbf{x}}
\newcommand{\bfy}{\mathbf{y}}
\newcommand{\bfz}{\mathbf{z}}
\newcommand{\bfh}{\mathbf{h}}
\newcommand{\bfe}{\mathbf{e}}
\newcommand{\os}{\mathrm{os}}
\newcommand{\dd}{\,\mathrm{d}}

\begin{abstract}
We introduce the concept of subsignature for semicoherent systems as a class of indexes that range from the system signature to the
Barlow-Proschan importance index. Specifically, given a nonempty subset $M$ of the set of components of a system, we define the $M$-signature of the system
as the $|M|$-tuple whose $k$th coordinate is the probability that the $k$th failure among the components in $M$ causes the system to fail. We
give various explicit linear expressions for this probability in terms of the structure function and the distribution of the component
lifetimes. We also examine the case of exchangeable lifetimes and the special case when $M$ is a modular set.
\end{abstract}

\keywords{Semicoherent system, dependent lifetimes, Barlow-Proschan index, system signature, system subsignature.}

\subjclass[2010]{62N05, 90B25, 94C10}

\maketitle

\section{Introduction}

Let $(C,\phi,F)$ be an $n$-component system (also denoted $(C,\phi)$ if no confusion arises), where $C=\{1,\ldots,n\}$ denotes the set of components, $\phi$ denotes the associated
structure function $\phi\colon\{0,1\}^n\to\{0,1\}$ (which expresses the state of the system in terms of the states of its components), and $F$
denotes the joint cumulative distribution function (c.d.f.) of the component lifetimes $T_1,\ldots,T_n$, that is,
$$
F(t_1,\ldots,t_n) ~=~ \Pr(T_1\leqslant t_1,\ldots,T_n\leqslant t_n),\qquad t_1,\ldots,t_n\geqslant 0.
$$

We assume that the system is \emph{semicoherent}, i.e., the structure
function $\phi$ is nondecreasing in each variable and satisfies the conditions $\phi(0,\ldots,0)=0$ and $\phi(1,\ldots,1)=1$. We also assume
that the c.d.f.\ $F$ has no ties, that is, $\Pr(T_i=T_j)=0$ for all distinct $i,j\in C$.

By identifying elements $(x_1,\ldots,x_n)$ of $\{0,1\}^n$ with subsets $A$ of $C$ in the usual way (i.e., setting $x_i=1$ if and only if $i\in A$), we may also regard the structure function as a set function $\phi\colon 2^C\to\{0,1\}$. For instance we can write $\phi(0,\ldots,0)=\phi(\varnothing)$ and $\phi(1,\ldots,1)=\phi(C)$.

The \emph{signature} of the system, a concept introduced first in 1985 by Samaniego \cite{Sam85} for systems whose components have continuous
and i.i.d.\ lifetimes and then recently extended to non-i.i.d.\ lifetimes (see \cite{MarMat11} and the references therein), is defined as the
$n$-tuple $\mathbf{p}=(p_1,\ldots,p_n)$, where $p_k$ is the probability that the $k$th component failure causes the system to fail. That is,
$$
p_k ~=~ \Pr(T_C=T_{k:n}),\qquad k\in\{1,\ldots,n\}\, ,
$$
where $T_C$ denotes the system lifetime and $T_{k:n}$ denotes the $k$th smallest lifetime, i.e., the $k$th order statistic obtained by rearranging
the variables $T_1,\ldots,T_n$ in ascending order of magnitude.

Interestingly, when the component lifetimes are i.i.d.\ (or even exchangeable) one can easily show that the signature $\mathbf{p}$ is independent of the distribution
function $F$. 
In this case the signature is often denoted by $\mathbf{s}=(s_1,\ldots,s_n)$, where $s_k=\Pr(T_C=T_{k:n})$. In
fact, Boland \cite{Bol01} showed that $s_k$ can be written explicitly in the form
\begin{equation}\label{eq:asad678}
s_k ~=~ \sum_{\textstyle{A\subseteq C\atop |A|=n-k+1}}\frac{1}{{n\choose |A|}}\,\phi(A)-\sum_{\textstyle{A\subseteq C\atop
|A|=n-k}}\frac{1}{{n\choose |A|}}\,\phi(A)\, .
\end{equation}
Being independent of $F$, the $n$-tuple $\mathbf{s}$ is a purely combinatorial object associated with the structure function $\phi$. Due to this feature, $\mathbf{s}$ is often referred to as the \emph{structural signature} of the system.

In the general nonexchangeable case the signature $\mathbf{p}$ may of course depend on $F$. In this case, it is then often referred to as the \emph{probability signature} of the system. Marichal and Mathonet \cite{MarMat11} recently showed that $p_k$ can be written explicitly in the form
\begin{equation}\label{eq:asghjad678}
p_k ~=~ \sum_{\textstyle{A\subseteq C\atop |A|=n-k+1}}q(A)\,\phi(A)-\sum_{\textstyle{A\subseteq C\atop |A|=n-k}}q(A)\,\phi(A)\, ,
\end{equation}
where the function $q\colon 2^C\to [0,1]$, called the \emph{relative quality function} associated with $F$, is defined by
\begin{equation}\label{eq:we56wz7}
q(A) ~=~ \Pr\Big(\max_{i\in C\setminus A}T_i<\min_{i\in A}T_i\Big)\, .
\end{equation}
That is, for every subset $A$ of $C$, the number $q(A)$ is the probability that the best $|A|$ components are precisely those in $A$.

Thus the general formula (\ref{eq:asghjad678}) reduces to Boland's formula (\ref{eq:asad678}) whenever $q(A)$ reduces to $1/{n\choose |A|}$ for every $A\subseteq C$, for instance when the component lifetimes are exchangeable. Formulas (\ref{eq:asghjad678}) and (\ref{eq:we56wz7}) also show how the distribution function $F$ is encoded in the probability signature $\mathbf{p}$ through the relative quality function $q$.

Since its introduction the concept of signature proved to be a very useful tool in the analysis of semicoherent systems, especially for the
comparison of different system designs and the computation of the system reliability (see, e.g., \cite{Sam07} for the i.i.d.\ case and
\cite{MarMat11,MarMatWal} for the general dependent case).

The \emph{Barlow-Proschan importance index} of the system, another useful concept introduced first in 1975 by Barlow and
Proschan~\cite{BarPro75} for systems whose components have continuous and independent lifetimes and then extended to the general dependent case
in \cite{Iye92,MarMat}, is defined as the $n$-tuple $\mathbf{I}_{\mathrm{BP}}$ whose $j$th coordinate is the probability that the failure of
component $j$ causes the system to fail, that is,
$$
I_{\mathrm{BP}}^{(j)} ~=~ \Pr(T_C=T_j)\, .
$$

Just as for the signature, when the component lifetimes are i.i.d.\ (or even exchangeable) the index $\mathbf{I}_{\mathrm{BP}}$ is also independent of the function $F$. It is then called the \emph{structural importance index} and denoted $\mathbf{b}=(b_1,\ldots,b_n)$, where
$b_j=\Pr(T_C=T_j)$. An explicit expression for $b_j$ in terms of the structure function values is given by
\begin{equation}\label{eq:6s7df5}
b_j ~=~ \sum_{A\subseteq C\setminus\{j\}} \frac{1}{n\,{n-1\choose |A|}}\,\Delta_j\phi(A)\, ,
\end{equation}
where $\Delta_j\phi(A)=\phi(A\cup\{j\})-\phi(A)$ for every $A\subseteq C\setminus\{j\}$. Marichal and Mathonet \cite{MarMat} extended this formula to the general nonexchangeable case into
\begin{equation}\label{eq:a76d5s}
I_{\mathrm{BP}}^{(j)} ~=~ \sum_{A\subseteq C\setminus\{j\}} q_j(A)\,\Delta_j\phi(A)\, ,
\end{equation}
where, for every component $j\in C$, the function $q_j\colon 2^{C\setminus\{j\}}\to [0,1]$, that we shall call the \emph{relative quality function of component $j$}, is defined by
$$
q_j(A) ~=~ \Pr\Big(\max_{i\in C\setminus A}T_i=T_j<\min_{i\in A}T_i\Big)\, .
$$
That is, for every component $j\in C$ and every subset $A$ of $C\setminus\{j\}$, the number $q_j(A)$ is the probability that the components that are better than component $j$ are precisely those in $A$. For instance, when $n=4$ we have
\begin{eqnarray*}
q_2(\{1,3\}) &=& \Pr(T_4<T_2<\min\{T_1,T_3\})\\
&=& \Pr(T_4<T_2<T_1<T_3)+\Pr(T_4<T_2<T_3<T_1)\, .
\end{eqnarray*}

By definition we have $\Delta_j\phi(A)\in\{0,1\}$ for every $j\in C$ and every $A\subseteq C\setminus\{j\}$. Moreover, we have $\Delta_j\phi(A)=1$ if and only if $\phi(A)=0$ and $\phi(A\cup\{j\})=1$, which means that component $j$ is critical with respect to subset $A$. Formula (\ref{eq:a76d5s}) then shows that $I_{\mathrm{BP}}^{(j)}$ is the sum of function $q_j$ over all subsets $A\subseteq C\setminus\{j\}$ for which $j$ is critical.

The important concepts of signature and Barlow-Proschan index motivate the introduction of the following more general concept. Let $M$ be a
nonempty subset of the set $C$ of components and let $m=|M|$. We define the \emph{$M$-signature} of the system as the $m$-tuple
$\mathbf{p}_M=(p_M^{(1)},\ldots,p_M^{(m)})$, where $p_M^{(k)}$ is the probability that the $k$th failure among the components in $M$ causes the system to
fail. That is,
$$
p_M^{(k)} ~=~ \Pr(T_C=T_{k:M}),\qquad k\in\{1,\ldots,m\}\, ,
$$
where $T_{k:M}$ denotes the $k$th smallest lifetime of the components in $M$, i.e., the $k$th order statistic obtained by rearranging the
variables $T_i$ ($i\in M$) in ascending order of magnitude. A \emph{subsignature} of the system is an $M$-signature for some $M\subseteq C$.

Clearly, when $M=C$ the $M$-signature reduces to the standard signature $\mathbf{p}$, which shows that the signature is a particular subsignature. At the
opposite, when $M$ is a singleton $\{j\}$ the $M$-signature reduces to the $1$-tuple $\mathbf{p}_{\{j\}}=(p_{\{j\}}^{(1)})$, where $p_{\{j\}}^{(1)} =
\Pr(T_C=T_j)$ is the $j$th coordinate of the Barlow-Proschan index $\mathbf{I}_{\mathrm{BP}}$. Thus, the subsignatures define a class of $2^n-1$ indexes that range from the
standard signature (when $M=C$) to the Barlow-Proschan index (when $M$ consists of a single component).

\begin{remark}\label{rem:1}
The concept of $M$-signature is particularly relevant when $M$ is a subset of potentially unreliable components. Consider for instance a large system whose components are rather reliable except two of them, $i,j\in C$, which are vulnerable. Then it may be informative to compute the probability $p_{\{i,j\}}^{(1)}$ (resp.\ $p_{\{i,j\}}^{(2)}$) that the first (resp.\ the second) failure among these two components causes the system to fail.
\end{remark}

In this paper we provide various explicit linear expressions for subsignatures. More precisely, considering the concept of subsignature as a simultaneous generalization of the concepts of signature and Barlow-Proschan index, we provide linear expressions for subsignatures which are simultaneous generalizations of formulas (\ref{eq:asghjad678}) and (\ref{eq:a76d5s}). We also provide linear expressions for subsignatures in terms of the signed domination function of the system (recall that the signed domination function defines the coefficients of the multilinear expression of the structure function). This is done in Section 2. In Section 3 we investigate the special case when the component lifetimes are exchangeable. Just as formulas (\ref{eq:asghjad678}) and (\ref{eq:a76d5s}) then reduce to formulas (\ref{eq:asad678}) and (\ref{eq:6s7df5}), respectively, we show how the general formulas obtained in Section 2 can also be particularized to this special case. These particularized formulas then show that, under the assumption of exchangeable lifetimes, the subsignatures do not depend on the distribution function $F$. For every nonempty subset $M$ of $C$, we then denote the $M$-signature $\mathbf{p}_M$ by $\mathbf{s}_M$ and naturally call it \emph{structural $M$-signature}. Finally, in Section 4 we examine the case when $M$ is a modular set and show how the $M$-signature is then related to the signature of the corresponding module.

\begin{remark}
In this paper we focus on the concept of subsignature as a mathematical generalization of the concepts of signature and Barlow-Proschan index and stress mainly on the theoretical and logical construction of the linear expressions that we provide for the subsignatures. Applications of the concept of subsignatures will be presented in another paper.
\end{remark}

\section{Main results}

In this section we provide and discuss various explicit linear expressions for the probability $p_M^{(k)}=\Pr(T_C=T_{k:M})$. We start with expressions in terms
of the functions $q_j$ and $\Delta_j\phi$, thus generalizing formula (\ref{eq:a76d5s}).

\begin{theorem}\label{thm:8sf6}
For every nonempty set $M\subseteq C$ and every $k\in\{1,\ldots,m\}$, we have
\begin{equation}\label{eq:main}
p_M^{(k)} ~=~ \sum_{\textstyle{A\subseteq C\atop |M\setminus A|=k}}\,\sum_{j\in M\setminus A}\, q_j(A)\,\Delta_j\,\phi(A) ~=~ \sum_{j\in
M}\,\sum_{\textstyle{A\subseteq C\setminus\{j\}\atop |M\setminus A|=k}}\, q_j(A)\,\Delta_j\,\phi(A)\, .
\end{equation}
\end{theorem}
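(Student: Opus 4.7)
The plan is to extend the reasoning behind formula (\ref{eq:a76d5s}) by decomposing the event $\{T_C=T_{k:M}\}$ according to which component actually causes the system to fail. Since the c.d.f.\ $F$ has no ties and $\phi$ is nondecreasing, exactly one component is critical at the moment the system fails, so $T_C$ almost surely coincides with $T_j$ for a unique $j\in C$. The event $T_C=T_{k:M}$ then forces that $j$ lies in $M$ and that $j$ is the $k$th member of $M$ to fail. Thus
\[
p_M^{(k)} ~=~ \sum_{j\in M}\Pr\bigl(T_C=T_j \text{ and } T_j=T_{k:M}\bigr),
\]
and the task reduces to computing each summand in terms of $q_j$ and $\Delta_j\phi$.

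For fixed $j\in M$, I would further condition on the set $A\subseteq C\setminus\{j\}$ of components that outlive $j$ (the components better than $j$). By the definitions of $q_j$ and of criticality, the event
\[
\bigl\{T_C=T_j,\ \{i\in C\setminus\{j\}:T_i>T_j\}=A\bigr\}
\]
has probability $q_j(A)\,\Delta_j\phi(A)$: the factor $q_j(A)$ accounts for the configuration of lifetimes, while $\Delta_j\phi(A)\in\{0,1\}$ indicates exactly whether $j$ is critical with respect to $A$, in which case $\phi$ jumps from $0$ to $1$ at $T_j$. Now, conditional on this configuration, the number of components of $M$ that fail strictly before $j$ equals $|M\setminus(A\cup\{j\})|$, so $T_j=T_{k:M}$ precisely when $|M\setminus(A\cup\{j\})|=k-1$, equivalently $|M\setminus A|=k$ (since $j\in M\setminus A$). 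Summing over admissible pairs $(j,A)$ therefore yields
\[
p_M^{(k)} ~=~ \sum_{j\in M}\,\sum_{\substack{A\subseteq C\setminus\{j\}\\ |M\setminus A|=k}} q_j(A)\,\Delta_j\phi(A),
\]
which is the second equality in (\ref{eq:main}).

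The first equality is then a routine reindexing: any pair $(j,A)$ with $j\in M$ and $A\subseteq C\setminus\{j\}$ corresponds bijectively to a pair $(A,j)$ with $A\subseteq C$ and $j\in M\setminus A$, and the constraint $|M\setminus A|=k$ is preserved. The main conceptual obstacle is the conditional probability argument in the middle step, namely verifying that the joint event describing both the identity of the critical component and the exact set of surviving components has probability $q_j(A)\Delta_j\phi(A)$; all subsequent bookkeeping is combinatorial. As a sanity check, the specializations $M=\{j\}$ with $k=1$, and $M=C$, should recover formulas (\ref{eq:a76d5s}) and (\ref{eq:asghjad678}) respectively.
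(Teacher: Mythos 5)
Your argument is correct and is essentially the paper's own proof: your joint event ``the components outliving $j$ are exactly $A$'' is precisely the paper's event $E_A^j=(\max_{i\in C\setminus A}T_i=T_j<\min_{i\in A}T_i)$, whose probability is $q_j(A)$ and which is contained in $(T_C=T_{k:M})$ exactly when $\Delta_j\phi(A)=1$ and $|M\setminus A|=k$, and otherwise disjoint from it. The only cosmetic difference is that the paper justifies the a.e.\ partition into the events $E_A^j$ by refining through the permutation events $(T_{\sigma(1)}<\cdots<T_{\sigma(n)})$, whereas you condition directly on which $j\in M$ realizes $T_{k:M}$ and on the set of components surviving it.
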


\begin{proof}
Let $S_n$ be the set of permutations on $C$ and, for every $\sigma\in S_n$, let $\omega_{\sigma}$ be the event $(T_{\sigma(1)}<\cdots < T_{\sigma(n)})$. Since the c.d.f.\ $F$ has no ties, the events $\omega_{\sigma}$ ($\sigma\in S_n$) form a partition almost everywhere (a.e.)\ of the sample space $\Omega=\left[0,+\infty\right[^n$.

For every $A\subseteq C$ such that $|M\setminus A|=k$ and every $j\in M\setminus A$, define the event
$$
E_A^j ~=~ \Big(\max_{i\in C\setminus A}T_i=T_j<\min_{i\in A}T_i\Big).
$$
These events form a partition a.e.\ of $\Omega$. Indeed, for every $\sigma\in S_n$, there exists a unique $i\in C$ such that $\sigma(i)\in M$ and $|\{\sigma(1),\ldots,\sigma(i)\}\cap M|=k$. We then have $\omega_{\sigma}\subset E_A^j$ if and only if $j=\sigma(i)$ and $A = \{\sigma(i+1),\ldots,\sigma(n)\}$.

Moreover, for every $A\subseteq C$ such that $|M\setminus A|=k$ and every $j\in M\setminus A$, we have $E_A^j \subset (T_C=T_{k:M})$ if and only if $\phi(A)=0$ and $\phi(A\cup\{j\})=1$, that is, $\Delta_j\phi(A)=1$. Otherwise, if $\Delta_j\phi(A)=0$, then $E_A^j\cap (T_C=T_{k:M})=\varnothing$.

We then have
$$
(T_C=T_{k:M}) ~\stackrel{\text{a.e.}}{=} \bigcup_{\textstyle{A\subseteq C {\,}:{\,} |M\setminus A|=k\atop j\in M\setminus A {\,}:{\,} \Delta_j\phi(A)=1}}E_A^j
$$
and hence
$$
p_M^{(k)} ~=~ \sum_{\textstyle{A\subseteq C\atop |M\setminus A|=k}}\,\sum_{j\in M\setminus A}\, \Pr(E_A^j)\,\Delta_j\,\phi(A),
$$
which proves the first expression in (\ref{eq:main}). The second one can be obtained by permuting the sums in the first expression.
\end{proof}

For instance in the special case when $M=\{i,j\}$, formulas (\ref{eq:main}) reduce to
\begin{eqnarray*}
p_{\{i,j\}}^{(1)} &=& \sum_{A\subseteq C\setminus\{i,j\}}\Big(q_i(A\cup\{j\})\,\Delta_i\,\phi(A\cup\{j\})+q_j(A\cup\{i\})\,\Delta_j\,\phi(A\cup\{i\})\Big){\,},\\
p_{\{i,j\}}^{(2)} &=& \sum_{A\subseteq C\setminus\{i,j\}}\Big(q_i(A)\,\Delta_i\,\phi(A)+q_j(A)\,\Delta_j\,\phi(A)\Big){\,}.
\end{eqnarray*}

\begin{example}\label{ex:7sdfds}
Consider a $3$-component system whose structure function is given by
$$
\phi(x_1,x_2,x_3) ~=~ (x_1\amalg x_2)\, x_3 ~=~ x_1x_3+x_2x_3-x_1x_2x_3{\,},
$$
where $\amalg$ is the coproduct operation defined by $x\amalg y=1-(1-x)(1-y)$. For such a system, we have for instance
\begin{eqnarray*}
p_{\{1,3\}}^{(1)} &=&
q_1(\{3\})+q_3(\{1\})+ q_3(\{1,2\})\\
&=& \Pr(T_2<T_1<T_3)+\Pr(T_2<T_3<T_1)+\Pr(T_3<T_1<T_2)+\Pr(T_3<T_2<T_1).
\end{eqnarray*}
\end{example}

Formula (\ref{eq:main}) shows that $p_M^{(k)}$ is a sum of $q_j(A)$ over certain subsets $A$ and the components $j$ in $M$ that are critical with respect to these subsets. In particular $p_M^{(k)}$ is a partial sum of terms of the form $\Pr(T_{\sigma(1)}<\cdots <T_{\sigma(n)})$, where $\sigma$ is a permutation on $C$.

When $M$ is a singleton $\{j\}$ we see immediately that (\ref{eq:main}) reduces to (\ref{eq:a76d5s}). When $M=C$, formula (\ref{eq:main}) provides the following new explicit expressions for the $k$th coordinate $p_k$ of the probability signature:
\begin{equation}\label{eq:w675re}
p_k ~=~ \sum_{\textstyle{A\subseteq C\atop |A|=n-k}}\,\sum_{j\in C\setminus A}\, q_j(A)\,\Delta_j\,\phi(A) ~=~ \sum_{j\in
C}\,\sum_{\textstyle{A\subseteq C\setminus\{j\}\atop |A|=n-k}}\, q_j(A)\,\Delta_j\,\phi(A)\, .
\end{equation}
Contrary to formula (\ref{eq:asghjad678}), these formulas give an expression for $p_k$ as a partial sum of terms of the form $\Pr(T_{\sigma(1)}<\cdots <T_{\sigma(n)})$.

\begin{example}
Consider the structure defined in Example~\ref{ex:7sdfds} and let us compute $p_1$. On the one hand, Eq.~(\ref{eq:asghjad678}) provides the expression
$$
p_1 ~=~ 1- \Pr(T_2<T_1<T_3)- \Pr(T_2<T_3<T_1)- \Pr(T_1<T_2<T_3)- \Pr(T_1<T_3<T_2).
$$
On the other hand, Eq.~(\ref{eq:w675re}) provides the partial sum $p_1=\Pr(T_3<T_1<T_2)+\Pr(T_3<T_2<T_1)$.
\end{example}

Interestingly, we have the following link between the subsignatures and the Barlow-Proschan index. For every nonempty subset $M\subseteq C$, we have
\begin{equation}\label{eq:probIpk}
\sum_{k=1}^mp_M^{(k)} ~=~ \Pr(T_C=T_j~\mbox{for some}~j\in M) ~=~ \sum_{j\in M}I_{\mathrm{BP}}^{(j)}{\,}.
\end{equation}
Using either (\ref{eq:a76d5s}) or (\ref{eq:main}), we obtain immediately the following expression for probability (\ref{eq:probIpk}).

\begin{corollary}
For every nonempty set $M\subseteq C$, we have
$$
\Pr(T_C=T_j~\mbox{for some}~j\in M) ~=~ \sum_{A\subseteq C}\,\sum_{j\in M\setminus A}\, q_j(A)\,\Delta_j\,\phi(A).
$$
\end{corollary}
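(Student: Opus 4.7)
The plan is to combine the probability identity (\ref{eq:probIpk}) with the Marichal--Mathonet expression (\ref{eq:a76d5s}) for the Barlow--Proschan index, and then observe that the resulting double sum is simply a reindexing of the one stated in the corollary. The identity (\ref{eq:probIpk}) itself deserves a brief justification: because $F$ has no ties, the events $(T_C=T_j)$ for $j\in M$ are pairwise disjoint almost everywhere, so additivity of probability yields $\Pr(T_C=T_j~\mbox{for some}~j\in M)=\sum_{j\in M}I_{\mathrm{BP}}^{(j)}$.

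Next, I would substitute (\ref{eq:a76d5s}) into each $I_{\mathrm{BP}}^{(j)}$, arriving at
$$
\sum_{j\in M}\,\sum_{A\subseteq C\setminus\{j\}} q_j(A)\,\Delta_j\phi(A).
$$
The key step is then a purely combinatorial reindexing: the pair condition ``$j\in M$ and $A\subseteq C\setminus\{j\}$'' is equivalent to ``$A\subseteq C$ and $j\in M\setminus A$'', so swapping the order of summation converts the above expression into $\sum_{A\subseteq C}\sum_{j\in M\setminus A} q_j(A)\,\Delta_j\phi(A)$, which is exactly the claimed right-hand side.

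A second route would be to sum (\ref{eq:main}) over $k=1,\ldots,m$: only subsets $A\subseteq C$ with $1\leqslant|M\setminus A|\leqslant m$ contribute, but extending the outer sum to all $A\subseteq C$ is harmless since the inner sum $\sum_{j\in M\setminus A}$ is empty whenever $M\setminus A=\varnothing$. The left-hand side $\sum_{k=1}^m p_M^{(k)}$ is then rewritten via (\ref{eq:probIpk}) to finish.

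There is no genuine obstacle here; both routes amount to rearranging summation indices. The only subtle point is the appeal to the no-ties hypothesis to ensure disjointness in (\ref{eq:probIpk}), which legitimizes additivity of probability; everything else is bookkeeping.
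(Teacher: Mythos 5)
Your proposal is correct and matches the paper's (unstated, "immediate") argument: the paper derives the corollary precisely by combining (\ref{eq:probIpk}) with either (\ref{eq:a76d5s}) or (\ref{eq:main}), which are the two routes you describe. Your extra remark justifying disjointness via the no-ties assumption is a sensible addition but does not change the approach.
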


If probability (\ref{eq:probIpk}) is strictly positive, then we can express the \emph{normalized $M$-signature} $p_M^{(k)}/\sum_{\ell =1}^mp_M^{(\ell)}$ as the conditional
probability
\begin{equation}\label{eq:probnorm}
\frac{p_M^{(k)}}{\sum_{\ell =1}^mp_M^{(\ell)}} ~=~ \Pr(T_C=T_{k:M}\mid T_C=T_j~\mbox{for some}~j\in M)\, .
\end{equation}

Formula~(\ref{eq:main}) expresses $p_M^{(k)}$ as a weighted sum of functions $\Delta_j\phi$ ($j\in M$). The following result yields an alternative expression for the probability $p_M^{(k)}$ as a weighted sum of function $\phi$.

\begin{corollary}\label{cor:8sdf6}
For every nonempty set $M\subseteq C$ and every $k\in\{1,\ldots,m\}$, we have
\begin{equation}\label{eq:sfd5}
p_M^{(k)} ~=~ \sum_{j\in M}\,\sum_{\textstyle{A\subseteq C\atop |(M\setminus A)\cup\{j\}|=k}}\, (-1)^{|\{j\}\setminus A|}\,
q_j(A\setminus\{j\})\,\phi(A)\, .
\end{equation}
\end{corollary}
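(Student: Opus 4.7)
My plan is to derive Corollary~\ref{cor:8sdf6} directly from the second expression for $p_M^{(k)}$ in Theorem~\ref{thm:8sf6} by expanding $\Delta_j\phi(A)=\phi(A\cup\{j\})-\phi(A)$ and performing a change of summation variable in each of the two resulting pieces.

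First I would write
$$
p_M^{(k)} ~=~ \sum_{j\in M}\sum_{\textstyle{A\subseteq C\setminus\{j\}\atop |M\setminus A|=k}} q_j(A)\,\phi(A\cup\{j\}) ~-~ \sum_{j\in M}\sum_{\textstyle{A\subseteq C\setminus\{j\}\atop |M\setminus A|=k}} q_j(A)\,\phi(A).
$$
In the first double sum I would set $B=A\cup\{j\}$ (so $j\in B$ and $A=B\setminus\{j\}$), and in the second I would simply rename $B=A$ (so $j\notin B$ and $A=B\setminus\{j\}$ as well). In both cases $q_j(A)=q_j(B\setminus\{j\})$, while the structure function becomes $\phi(B)$ uniformly.

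The key verification is that in both sums the size condition becomes the single condition $|(M\setminus B)\cup\{j\}|=k$. In the first sum $j\in B\cap M$, so $M\setminus(B\setminus\{j\})=(M\setminus B)\cup\{j\}$ is a disjoint union and $|M\setminus A|=|(M\setminus B)\cup\{j\}|$. In the second sum $j\in M\setminus B$, so $(M\setminus B)\cup\{j\}=M\setminus B=M\setminus A$ and the identity is trivial. This matches the range of summation in~(\ref{eq:sfd5}).

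It remains to check the signs. When $j\in B$ (first sum) the contribution is $+q_j(B\setminus\{j\})\phi(B)$ and $|\{j\}\setminus B|=0$, whereas when $j\notin B$ (second sum) the contribution is $-q_j(B\setminus\{j\})\phi(B)$ and $|\{j\}\setminus B|=1$. In each case the coefficient equals $(-1)^{|\{j\}\setminus B|}$, so both sums amalgamate into
$$
p_M^{(k)} ~=~ \sum_{j\in M}\,\sum_{\textstyle{B\subseteq C\atop |(M\setminus B)\cup\{j\}|=k}} (-1)^{|\{j\}\setminus B|}\, q_j(B\setminus\{j\})\,\phi(B),
$$
which is precisely~(\ref{eq:sfd5}) after renaming $B$ to $A$. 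The only subtle point is the bookkeeping in the first sum, where one must remember that $j\in M\cap B$ to justify the disjointness of $M\setminus B$ and $\{j\}$; once that is stated the rest is just rewriting.
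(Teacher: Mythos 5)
Your proof is correct and follows essentially the same argument as the paper, which simply runs the computation in the reverse direction: it splits the sum in (\ref{eq:sfd5}) according to whether $j\in A$ or $j\notin A$ and recognizes the result as the right-hand side of (\ref{eq:main}), whereas you expand $\Delta_j\phi$ in (\ref{eq:main}) and re-index to obtain (\ref{eq:sfd5}). The bookkeeping of the size condition and the sign $(-1)^{|\{j\}\setminus A|}$ is handled exactly as in the paper.
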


\begin{proof}
The right-hand side of (\ref{eq:sfd5}) can be written as
\begin{eqnarray*}
\lefteqn{\sum_{j\in M}\,\sum_{\textstyle{A\subseteq C,~A\ni j\atop |M\setminus A|=k-1}}\, q_j(A\setminus\{j\})\,\phi(A)-\sum_{j\in M}\,\sum_{\textstyle{A\subseteq C,~A\not\ni j\atop |M\setminus A|=k}}\, q_j(A)\,\phi(A)}\\
&=& \sum_{j\in M}\,\sum_{\textstyle{A\subseteq C\setminus\{j\}\atop |M\setminus A|=k}}\, q_j(A)\,\phi(A\cup\{j\})-\sum_{j\in
M}\,\sum_{\textstyle{A\subseteq C\setminus\{j\}\atop |M\setminus A|=k}}\, q_j(A)\,\phi(A)\, ,
\end{eqnarray*}
which is precisely the right-hand side of (\ref{eq:main}).
\end{proof}

We now provide an alternative linear expression for the probability $p_M^{(k)}$ which generalizes formula (\ref{eq:asghjad678}). This expression is a difference of two partial sums of function $\phi$, weighted by probabilities.

For every nonempty
set $M\subseteq C$, define the set functions $q_M^{\downarrow}\colon 2^C\setminus\{\varnothing\}\to\R$ and $q_M^{\uparrow}\colon
2^C\setminus\{C\}\to\R$ by
$$
q_M^{\downarrow}(A) ~=~ \sum_{j\in M\cap A}q_j(A\setminus\{j\}) ~=~ \Pr\Big(\exists\, j\in M:\max_{i\in C\setminus A}T_i<T_j=\min_{i\in
A}T_i\Big)
$$
and
$$
q_M^{\uparrow}(A) ~=~ \sum_{j\in M\setminus A}q_j(A) ~=~ \Pr\Big(\exists\, j\in M:\max_{i\in C\setminus A}T_i=T_j<\min_{i\in A}T_i\Big)\, ,
$$
respectively.

\begin{corollary}\label{cor:8sdf67}
For every nonempty set $M\subseteq C$ and every $k\in\{1,\ldots,m\}$, we have
\begin{equation}\label{eq:main2}
p_M^{(k)} ~=~ \sum_{\textstyle{A\subseteq C\atop |M\cap A|=m-k+1}}\, q_M^{\downarrow}(A)\,\phi(A)-\sum_{\textstyle{A\subseteq C\atop |M\cap
A|=m-k}}\, q_M^{\uparrow}(A)\,\phi(A)\, .
\end{equation}
\end{corollary}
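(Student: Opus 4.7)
The plan is to derive Corollary~\ref{cor:8sdf67} directly from Corollary~\ref{cor:8sdf6} by splitting the sum in (\ref{eq:sfd5}) according to whether $j\in A$ or $j\notin A$, and then exchanging the order of summation to recognize $q_M^{\downarrow}$ and $q_M^{\uparrow}$. No further probabilistic argument should be needed, since the entire content of $p_M^{(k)}$ as a sum of order-statistic probabilities is already packaged into (\ref{eq:sfd5}).

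First I would rewrite the inner sum in (\ref{eq:sfd5}) by partitioning over the two cases.

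\emph{Case $j\in A$.} Then $\{j\}\setminus A=\varnothing$, so the sign is $+1$, and since $j\notin M\setminus A$ we have $|(M\setminus A)\cup\{j\}|=|M\setminus A|+1$, i.e., the condition $|(M\setminus A)\cup\{j\}|=k$ becomes $|M\setminus A|=k-1$, equivalently $|M\cap A|=m-k+1$.

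\emph{Case $j\notin A$.} Then $\{j\}\setminus A=\{j\}$, so the sign is $-1$; moreover $A\setminus\{j\}=A$, and since $j\in M\setminus A$ we have $(M\setminus A)\cup\{j\}=M\setminus A$, so the condition becomes $|M\setminus A|=k$, i.e., $|M\cap A|=m-k$.

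Thus (\ref{eq:sfd5}) splits as
$$
p_M^{(k)} ~=~ \sum_{j\in M}\,\sum_{\substack{A\subseteq C,~A\ni j\\ |M\cap A|=m-k+1}} q_j(A\setminus\{j\})\,\phi(A) ~-~ \sum_{j\in M}\,\sum_{\substack{A\subseteq C,~A\not\ni j\\ |M\cap A|=m-k}} q_j(A)\,\phi(A).
$$
Next I would swap the two sums in each term. In the first term, the condition $j\in M$ and $j\in A$ is equivalent to $j\in M\cap A$, so summing over $j$ first yields $\sum_{j\in M\cap A}q_j(A\setminus\{j\})=q_M^{\downarrow}(A)$ (using the very definition of $q_M^{\downarrow}$, which is well-defined since $|M\cap A|=m-k+1\geq 1$ implies $A\neq\varnothing$). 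In the second term, $j\in M$ and $j\notin A$ is equivalent to $j\in M\setminus A$, so summing over $j$ first yields $\sum_{j\in M\setminus A}q_j(A)=q_M^{\uparrow}(A)$ (and $A\neq C$ here since $|M\cap A|=m-k<m$ when $k\geq 1$). Combining these two rewritings produces exactly (\ref{eq:main2}).

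No real obstacle is expected; the proof is pure combinatorial bookkeeping, and the only thing to keep track of carefully is that the conditions on $|M\cap A|$ guarantee that $q_M^{\downarrow}$ and $q_M^{\uparrow}$ are evaluated in their respective domains of definition.
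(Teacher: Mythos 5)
Your proof is correct. It differs from the paper only in its starting point: you derive (\ref{eq:main2}) from Corollary~\ref{cor:8sdf6} by splitting (\ref{eq:sfd5}) into the cases $j\in A$ and $j\notin A$ and then swapping the order of summation to recognize $q_M^{\downarrow}$ and $q_M^{\uparrow}$, whereas the paper goes directly from Theorem~\ref{thm:8sf6}, expanding $\Delta_j\phi(A)=\phi(A\cup\{j\})-\phi(A)$ in (\ref{eq:main}) and re-indexing the first sum via $A\mapsto A\cup\{j\}$. Since the paper's own proof of Corollary~\ref{cor:8sdf6} consists of exactly the $j\in A$ versus $j\notin A$ split you perform, the two derivations are the same combinatorial bookkeeping traversed in a different order, and neither offers more generality than the other; your checks that the constraints $|M\cap A|=m-k+1$ and $|M\cap A|=m-k$ keep $q_M^{\downarrow}$ and $q_M^{\uparrow}$ inside their domains of definition are a small but welcome extra care that the paper leaves implicit.
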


\begin{proof}
By (\ref{eq:main}) we have
\begin{eqnarray*}
p_M^{(k)} &=& \sum_{\textstyle{A\subseteq C\atop |M\setminus A|=k}}\,\sum_{j\in M\setminus A}\, q_j(A)\,\Delta_j\,\phi(A)\\
&=& \sum_{\textstyle{A\subseteq C\atop |M\setminus A|=k}}\,\sum_{j\in M\setminus A}\, q_j(A)\,\phi(A\cup\{j\})-\sum_{\textstyle{A\subseteq
C\atop |M\setminus A|=k}}\,\sum_{j\in M\setminus A}\, q_j(A)\,\phi(A)\\
&=& \sum_{\textstyle{A\subseteq C\atop |M\setminus A|=k-1}}\,\sum_{j\in M\cap A}q_j(A\setminus\{j\})\,\phi(A)-\sum_{\textstyle{A\subseteq C\atop
|M\setminus A|=k}}\,q_M^{\uparrow}(A)\,\phi(A)\, ,
\end{eqnarray*}
which completes the proof.
\end{proof}

We end this section by providing an explicit linear expression for the probability $p_M^{(k)}$ in terms of the \emph{signed domination function} of the system \cite{BarIye88} (or equivalently, the M\"obius transform of the structure
function \cite[Sect.~1.5]{Ram90}). Recall that the signed domination function of the system is the set function $d\colon 2^C\to\R$ which gives the coefficients of the unique
multilinear expression of the structure function, that is,
$$
\phi(\bfx) ~=~ \sum_{A\subseteq C}d(A)\,\prod_{i\in A}x_i\, .
$$
The conversion formulas between $d$ and $\phi$ are given by
\begin{equation}\label{eq:76sdds}
d(A) ~=~ \sum_{B\subseteq A}(-1)^{|A|-|B|}\,\phi(B)\quad\mbox{and}\quad\phi(A) ~=~ \sum_{B\subseteq A}d(B)\, .
\end{equation}
A very simple linear expression for $p_M^{(k)}$ in terms of the signed domination function is presented in the following theorem.

\begin{theorem}\label{thm:sa8d7}
For every nonempty set $M\subseteq C$ and every $k\in\{1,\ldots,m\}$, we have
$$
p_M^{(k)} ~=~ \sum_{\textstyle{A\subseteq C\atop |M\cap A|\leqslant m-k+1}}d(A)\,\Pr\Big(T_{k:M}=\min_{i\in A}T_i\Big)\, ,
$$
or equivalently,
$$
p_M^{(k)} ~=~ \sum_{A\subseteq C}d(A)\,\Pr\Big(T_{k:M}=\min_{i\in A}T_i\Big)\, .
$$
\end{theorem}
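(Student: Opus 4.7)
The plan is to combine the permutation partition $\{\omega_\sigma\}_{\sigma\in S_n}$ of $\Omega$ from the proof of Theorem~\ref{thm:8sf6} with the conversion formula $\phi(A)=\sum_{B\subseteq A}d(B)$ from (\ref{eq:76sdds}). First I would prove the unrestricted (second) identity by a per-permutation reduction; then I would deduce the restricted (first) identity by showing that the omitted terms contribute zero probability.

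For fixed $\sigma\in S_n$, I would reuse the notation from the proof of Theorem~\ref{thm:8sf6}: let $m_\sigma\in\{1,\ldots,n\}$ be the unique index with $\sigma(m_\sigma)\in M$ and $|\{\sigma(1),\ldots,\sigma(m_\sigma)\}\cap M|=k$, set $j_\sigma=\sigma(m_\sigma)$, and $U_\sigma=\{\sigma(m_\sigma),\ldots,\sigma(n)\}$. On $\omega_\sigma$ one has $T_{k:M}=T_{j_\sigma}$, and for any $A\subseteq C$ the equality $T_{k:M}=\min_{i\in A}T_i$ holds if and only if $j_\sigma\in A$ and $A\subseteq U_\sigma$. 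I would then compute, restricting to $\omega_\sigma$,
\[
\sum_{A\subseteq C}d(A)\,\Pr\bigl(\omega_\sigma\cap\{T_{k:M}=\min_{i\in A}T_i\}\bigr) ~=~ \Pr(\omega_\sigma)\sum_{A{\,}:{\,}j_\sigma\in A\subseteq U_\sigma}d(A) ~=~ \Pr(\omega_\sigma)\bigl(\phi(U_\sigma)-\phi(U_\sigma\setminus\{j_\sigma\})\bigr),
\]
using (\ref{eq:76sdds}) for the last step. Since the system is semicoherent, the final difference equals $1$ precisely when $j_\sigma$ is critical for $U_\sigma\setminus\{j_\sigma\}$, which on $\omega_\sigma$ is exactly the event $T_C=T_{j_\sigma}=T_{k:M}$. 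Summing over $\sigma\in S_n$ would then give
\[
\sum_{A\subseteq C}d(A)\,\Pr(T_{k:M}=\min_{i\in A}T_i) ~=~ \Pr(T_C=T_{k:M}) ~=~ p_M^{(k)},
\]
which is the second identity of the theorem.

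To pass to the restricted form, I would show that $\Pr(T_{k:M}=\min_{i\in A}T_i)=0$ whenever $|M\cap A|>m-k+1$. On any $\omega_\sigma$, exactly $k-1$ elements of $M$ occupy positions strictly below $m_\sigma$; if $T_{k:M}=\min_{i\in A}T_i$ holds on $\omega_\sigma$, none of those $M$-elements may lie in $A$, for otherwise $\min_{i\in A}T_i$ would be strictly smaller than $T_{k:M}$. Hence $M\cap A$ is contained in the $m-k+1$ elements of $M$ at positions $\geqslant m_\sigma$, giving $|M\cap A|\leqslant m-k+1$ almost surely and forcing the desired truncation. The main point of care is the translation between the subset description of $A$, on which $d$ is defined, and the positional description along $\sigma$ used to identify $T_{k:M}$ and the critical index of the system; once the per-permutation reduction is in place, the rest is algebraic.
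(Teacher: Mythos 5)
Your argument is correct, but it takes a genuinely different route from the paper. The paper derives Theorem~\ref{thm:sa8d7} algebraically from Corollary~\ref{cor:8sdf67}: it substitutes the conversion formula $\phi(A)=\sum_{B\subseteq A}d(B)$ into (\ref{eq:main2}), permutes the sums, and identifies the resulting tail sums $\sum_{A\supseteq B,\,|M\cap A|=m-k+1}q_M^{\downarrow}(A)$ and $\sum_{A\supseteq B,\,|M\cap A|=m-k}q_M^{\uparrow}(A)$ with the probabilities $\Pr(T_{k:M}\leqslant\min_{i\in B}T_i)$ and $\Pr(T_{k:M}<\min_{i\in B}T_i)$, whose difference then collapses to $\Pr(T_{k:M}=\min_{i\in B}T_i)$ after the same truncation observations you make at the end. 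You instead bypass Corollaries~\ref{cor:8sdf6} and \ref{cor:8sdf67} entirely and argue pointwise on the permutation partition $\{\omega_\sigma\}$: on each $\omega_\sigma$ the event $\{T_{k:M}=\min_{i\in A}T_i\}$ is deterministic, the domination coefficients telescope via $\sum_{A{\,}:{\,}j_\sigma\in A\subseteq U_\sigma}d(A)=\phi(U_\sigma)-\phi(U_\sigma\setminus\{j_\sigma\})=\Delta_{j_\sigma}\phi(U_\sigma\setminus\{j_\sigma\})$, and semicoherence identifies this indicator with the event $T_C=T_{j_\sigma}=T_{k:M}$ exactly as in the proof of Theorem~\ref{thm:8sf6}; summing over $\sigma$ gives the unrestricted identity, and your counting of the $k-1$ elements of $M$ preceding position $m_\sigma$ correctly shows $\Pr(T_{k:M}=\min_{i\in A}T_i)=0$ when $|M\cap A|>m-k+1$, yielding the restricted form (the term $A=\varnothing$ is harmless since $d(\varnothing)=\phi(\varnothing)=0$). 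Your approach buys a self-contained, transparently probabilistic proof that makes the truncation condition visible at the level of orderings and interprets $\Pr(T_{k:M}=\min_{i\in A}T_i)$ directly (in line with Remark~\ref{rem:8s7f}); the paper's approach buys the intermediate identities for the cumulative functions $q_M^{\downarrow}$ and $q_M^{\uparrow}$ and fits the theorem into its chain of corollaries built on (\ref{eq:main2}).
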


\begin{proof}
By substituting the second formula of (\ref{eq:76sdds}) in (\ref{eq:main2}) and then permuting the resulting sums, we obtain
\begin{eqnarray*}
p_M^{(k)} &=& \sum_{\textstyle{A\subseteq C\atop |M\cap A|=m-k+1}}\, q_M^{\downarrow}(A)\,\sum_{B\subseteq A}d(B)-\sum_{\textstyle{A\subseteq C\atop |M\cap A|=m-k}}\, q_M^{\uparrow}(A)\,\sum_{B\subseteq A}d(B)\\
&=& \sum_{\textstyle{B\subseteq C\atop |M\cap B|\leqslant m-k+1}}d(B)\,\sum_{\textstyle{A\supseteq B\atop |M\cap A|= m-k+1}}
q_M^{\downarrow}(A)-\sum_{\textstyle{B\subseteq C\atop |M\cap B|\leqslant m-k}}d(B)\,\sum_{\textstyle{A\supseteq B\atop |M\cap A|= m-k}}
q_M^{\uparrow}(A)\, .
\end{eqnarray*}
However, we have
\begin{eqnarray*}
\sum_{\textstyle{A\supseteq B\atop |M\cap A|=m-k+1}} q_M^{\downarrow}(A) &=& \Pr\Big(\exists\, j\in M,\,\exists\, A \supseteq B,\, |M\cap A|=m-k+1:\max_{i\in C\setminus A}T_i<T_j=\min_{i\in A}T_i\Big)\\
&=& \Pr\Big(T_{k:M}\leqslant\min_{i\in B}T_i\Big)
\end{eqnarray*}
and
\begin{eqnarray*}
\sum_{\textstyle{A\supseteq B\atop |M\cap A|=m-k}} q_M^{\uparrow}(A) &=& \Pr\Big(\exists\, j\in M,\,\exists\, A \supseteq B,\, |M\cap A|=m-k:\max_{i\in C\setminus A}T_i=T_j<\min_{i\in A}T_i\Big)\\
&=& \Pr\Big(T_{k:M}<\min_{i\in B}T_i\Big)\, .
\end{eqnarray*}
Thus, we have
\begin{eqnarray*}
p_M^{(k)} &=& \sum_{\textstyle{B\subseteq C\atop |M\cap B|\leqslant m-k+1}}d(B)\,\Pr\Big(T_{k:M}\leqslant\min_{i\in B}T_i\Big)-\sum_{\textstyle{B\subseteq C\atop |M\cap B|\leqslant m-k}}d(B)\,\Pr\Big(T_{k:M}<\min_{i\in B}T_i\Big)\\
&=& \sum_{\textstyle{B\subseteq C\atop |M\cap B|=m-k+1}}d(B)\,\Pr\Big(T_{k:M}\leqslant\min_{i\in B}T_i\Big)+\sum_{\textstyle{B\subseteq C\atop
|M\cap B|\leqslant m-k}}d(B)\,\Pr\Big(T_{k:M}=\min_{i\in B}T_i\Big)\, .
\end{eqnarray*}
We observe that we cannot have $T_{k:M}<\min_{i\in B}T_i$ if $|M\cap B|=m-k+1$. This proves the first formula of the theorem. To see that the
second formula holds, just observe that we cannot have $T_{k:M}=\min_{i\in B}T_i$ if $|M\cap B|>m-k+1$.
\end{proof}

\begin{example}
Consider the structure defined in Example~\ref{ex:7sdfds}. For this structure, we have for instance
\begin{eqnarray*}
p_{\{1,3\}}^{(1)} &=& \sum_{A\subseteq\{1,2,3\}}d(A)\,\Pr\Big(T_{1:\{1,3\}}=\min_{i\in A}T_i\Big)\\
&=& \Pr\Big(\min_{i\in\{1,3\}}T_i=\min_{i\in\{2,3\}}T_i\Big)+1-\Pr\Big(\min_{i\in\{1,3\}}T_i=\min_{i\in\{1,2,3\}}T_i\Big)\\
&=& \big(\Pr(T_3<T_1<T_2)+\Pr(T_3<T_2<T_1)\big)+\big(\Pr(T_2<T_1<T_3)+\Pr(T_2<T_3<T_1)\big).
\end{eqnarray*}
\end{example}

\begin{remark}\label{rem:8s7f}
We observe that the probability $\Pr(T_{k:M}=\min_{i\in A}T_i)$ is exactly the $k$th coordinate of the $M$-signature of the semicoherent system
obtained from the current system by transforming the structure function into $\phi(\bfx)=\prod_{i\in A}x_i$. This result follows immediately
from the fact that the modified system has lifetime $\min_{i\in A}T_i$.
\end{remark}

From Theorem~\ref{thm:sa8d7} we immediately derive the following corollary, which was already established in \cite{MarMat}.

\begin{corollary}
For every $k\in\{1,\ldots,n\}$ and every $j\in C$, we have
$$
p_k ~=~ \sum_{A\subseteq C}d(A)\,\Pr\Big(T_{k:n}=\min_{i\in A}T_i\Big)
$$
and
$$
I_{\mathrm{BP}}^{(j)} ~=~ \sum_{A\subseteq C}d(A)\,\Pr\Big(T_j=\min_{i\in A}T_i\Big)\, .
$$
\end{corollary}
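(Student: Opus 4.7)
The plan is to obtain the corollary as two direct specializations of the second (more symmetric) formula in Theorem~\ref{thm:sa8d7}, namely
$$
p_M^{(k)} ~=~ \sum_{A\subseteq C}d(A)\,\Pr\Big(T_{k:M}=\min_{i\in A}T_i\Big).
$$
No new computation is required; the only work is to pick the right $M$ in each case and to reconcile the resulting notation with the standard signature and Barlow--Proschan index.

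For the first identity I would take $M=C$, so that $m=n$ and $T_{k:M}=T_{k:n}$. As noted already in the introduction, when $M=C$ the $M$-signature reduces to the standard signature, i.e.\ $p_C^{(k)}=p_k$ for every $k\in\{1,\ldots,n\}$. Substituting into the theorem yields the first displayed formula of the corollary.

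For the second identity I would take $M=\{j\}$ for a fixed $j\in C$. Then $m=1$ and necessarily $k=1$, and the single order statistic $T_{1:\{j\}}$ is just $T_j$. The introduction also records that $p_{\{j\}}^{(1)}=\Pr(T_C=T_j)=I_{\mathrm{BP}}^{(j)}$. Substituting these into the theorem gives
$$
I_{\mathrm{BP}}^{(j)} ~=~ \sum_{A\subseteq C}d(A)\,\Pr\Big(T_j=\min_{i\in A}T_i\Big),
$$
which is the second formula of the corollary.

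There is no real obstacle here; the content of the corollary is entirely contained in Theorem~\ref{thm:sa8d7}. The only minor point worth emphasizing in the writeup is that one may use either version of the theorem (the constrained sum over $|M\cap A|\leqslant m-k+1$ or the full sum over all $A\subseteq C$) since in both specializations the probability $\Pr(T_{k:M}=\min_{i\in A}T_i)$ automatically vanishes on the excluded subsets, so no extra justification is needed when stating the result as an unrestricted sum over $A\subseteq C$.
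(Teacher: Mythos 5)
Your proposal is correct and matches the paper's approach exactly: the paper derives this corollary immediately from Theorem~\ref{thm:sa8d7} by the specializations $M=C$ (giving $p_k$) and $M=\{j\}$ (giving $I_{\mathrm{BP}}^{(j)}$), just as you do. Your remark about the vanishing of $\Pr\big(T_{k:M}=\min_{i\in A}T_i\big)$ on the excluded subsets is the same observation the paper uses to pass between the restricted and unrestricted sums in the theorem itself.
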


\section{Exchangeable component lifetimes}

We now consider the special case when the functions $q_j$ ($j\in C$) satisfy the condition
\begin{equation}\label{eq:8sa6}
q_j(A) ~=~ \frac{1}{n\,{n-1\choose |A|}}\, ,\qquad j\in C\, ,~A\subseteq C\setminus\{j\}\, .
\end{equation}
It is easy to see that this condition holds whenever the lifetimes $T_1,\ldots,T_n$ are i.i.d.\ or, more generally, exchangeable (see
\cite{MarMat}). In this case, for every nonempty subset $M\subseteq C$, we also have
$$
q_M^{\downarrow}(A) ~=~ \frac{|M\cap A|}{n\,{n-1\choose |A|-1}}\, ,\quad q_M^{\uparrow}(A) ~=~ \frac{|M\setminus A|}{n\,{n-1\choose |A|}}\,
,\quad\mbox{and}\quad q(A) ~=~ \frac{1}{{n\choose |A|}}\, .
$$

As mentioned in the introduction, combining this with (\ref{eq:main}) shows that the $M$-signature $\mathbf{p}_M$ does not depend on the distribution function $F$. We then call it \emph{structural $M$-signature} and denoted it by $\mathbf{s}_M$.

Theorem~\ref{thm:8sf6} and Corollaries~\ref{cor:8sdf6} and \ref{cor:8sdf67} are then immediately specialized to the following result.

\begin{corollary}\label{cor:8fsf6}
Assume that the functions $q_j$ ($j\in C$) satisfy condition (\ref{eq:8sa6}). For every nonempty set $M\subseteq C$ and every $k\in\{1,\ldots,m\}$, we have
\begin{equation}\label{eq:fgt1}
s_M^{(k)} ~=~ \sum_{\textstyle{A\subseteq C\atop |M\setminus A|=k}}\,\frac{1}{n\,{n-1\choose |A|}}\,\sum_{j\in M\setminus A}\,\Delta_j\,\phi(A) ~=~
\sum_{j\in M}\,\sum_{\textstyle{A\subseteq C\setminus\{j\}\atop |M\setminus A|=k}}\,\frac{1}{n\,{n-1\choose |A|}}\,\Delta_j\,\phi(A)\, ,
\end{equation}
$$
s_M^{(k)} ~=~ \sum_{j\in M}\,\sum_{\textstyle{A\subseteq C\atop |(M\setminus A)\cup\{j\}|=k}}\, (-1)^{|\{j\}\setminus A|}\, \frac{1}{n\,{n-1\choose
|A\setminus\{j\}|}}\,\phi(A)\, ,
$$
and
$$
s_M^{(k)} ~=~ \sum_{\textstyle{A\subseteq C\atop |M\cap A|=m-k+1}}\, \frac{m-k+1}{n\,{n-1\choose |A|-1}}\,\phi(A)-\sum_{\textstyle{A\subseteq C\atop
|M\cap A|=m-k}}\, \frac{k}{n\,{n-1\choose |A|}}\,\phi(A)\, .
$$
\end{corollary}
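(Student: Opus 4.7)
The plan is essentially a direct specialization: Corollary~\ref{cor:8fsf6} follows by substituting the hypothesis $q_j(A)=1/(n\binom{n-1}{|A|})$ into the three formulas already proved, namely (\ref{eq:main}), (\ref{eq:sfd5}), and (\ref{eq:main2}). First, I would note that under (\ref{eq:8sa6}) the value $q_j(A)$ depends on $A$ only through $|A|$, which immediately lets it be pulled past the inner sum over $j\in M\setminus A$ in (\ref{eq:main}). This gives the two expressions in (\ref{eq:fgt1}) at once (the second is just a reordering of summation, as in the original theorem). For the second displayed formula, the substitution into (\ref{eq:sfd5}) is literal: $q_j(A\setminus\{j\})=1/(n\binom{n-1}{|A\setminus\{j\}|})$, so no further manipulation is needed.

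The only mildly nontrivial piece is the third displayed formula, which comes from Corollary~\ref{cor:8sdf67}. Here I would verify the two formulas for $q_M^{\downarrow}$ and $q_M^{\uparrow}$ stated in the paragraph preceding the corollary: under (\ref{eq:8sa6}),
\[
q_M^{\downarrow}(A) \;=\; \sum_{j\in M\cap A} q_j(A\setminus\{j\}) \;=\; \frac{|M\cap A|}{n\binom{n-1}{|A|-1}},
\qquad
q_M^{\uparrow}(A) \;=\; \sum_{j\in M\setminus A} q_j(A) \;=\; \frac{|M\setminus A|}{n\binom{n-1}{|A|}},
\]
both of which are immediate because $q_j(\cdot)$ depends only on cardinalities. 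Plugging these into (\ref{eq:main2}) and then using the cardinality constraints attached to the two outer sums (namely $|M\cap A|=m-k+1$ in the first, so $|M\cap A|$ is a constant that can be moved outside the indicator, and $|M\setminus A|=k$ in the second) yields the third formula of the corollary.

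I do not anticipate any real obstacle. The whole proof is a mechanical substitution plus factoring out the cardinality-dependent weights; the only place where one has to be careful is to match the index $|A|$ versus $|A|-1$ correctly between $q_M^{\downarrow}$ and $q_M^{\uparrow}$ (since $q_j(A\setminus\{j\})$ involves $\binom{n-1}{|A|-1}$ while $q_j(A)$ involves $\binom{n-1}{|A|}$). Once that bookkeeping is done, the three claimed identities follow directly from Theorem~\ref{thm:8sf6} and Corollaries~\ref{cor:8sdf6} and \ref{cor:8sdf67}, so the proof can be presented in essentially one line per formula.
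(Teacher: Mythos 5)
Your proposal is correct and matches the paper's treatment: the paper simply states that Theorem~\ref{thm:8sf6} and Corollaries~\ref{cor:8sdf6} and \ref{cor:8sdf67} "are immediately specialized" under condition (\ref{eq:8sa6}), using the same evaluations of $q_j$, $q_M^{\downarrow}$, and $q_M^{\uparrow}$ that you verify. Your bookkeeping (in particular $|M\cap A|=m-k+1$ and $|M\setminus A|=k$ in the two sums of the third formula, and the $|A|$ versus $|A|-1$ shift) is exactly what is needed.
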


From (\ref{eq:fgt1}) we immediately derive new expressions for the structural signature $s_k$, namely
$$
s_k ~=~ \sum_{\textstyle{A\subseteq C\atop |A|=n-k}}\,\frac{1}{n\,{n-1\choose |A|}}\,\sum_{j\in C\setminus A}\,\Delta_j\,\phi(A) ~=~
\sum_{j\in C}\,\sum_{\textstyle{A\subseteq C\setminus\{j\}\atop |A|=n-k}}\,\frac{1}{n\,{n-1\choose |A|}}\,\Delta_j\,\phi(A)\, .
$$

An expression for $s_M^{(k)}$ in terms of the signed domination function is given in the following corollary. Recall first the following well-known identity
\begin{equation}\label{eq:beta}
\int_0^1 t^p(1-t)^q\, dt ~=~ \frac{1}{(p+q+1){p+q\choose p}}{\,},\qquad p,q\in\N.
\end{equation}

\begin{corollary}\label{cor:67f}
Assume that the functions $q_j$ ($j\in C$) satisfy condition (\ref{eq:8sa6}). For every nonempty set $M\subseteq C$ and every $k\in\{1,\ldots,m\}$, we have
$$
s_M^{(k)} ~=~ \sum_{\textstyle{A\subseteq C\atop k-1\leqslant|M\setminus A|\leqslant m-1}}d(A)\,\frac{m-|M\setminus
A|}{k}\,\frac{{|M\setminus A|\choose k-1}}{{|M\setminus A|+|A|\choose k}}\, ,
$$
or equivalently,
$$
s_M^{(k)} ~=~ \sum_{A\subseteq C}d(A)\,\frac{m-|M\setminus A|}{k}\,\frac{{|M\setminus A|\choose k-1}}{{|M\setminus A|+|A|\choose k}}\, .
$$
\end{corollary}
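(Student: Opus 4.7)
The plan is to derive Corollary~\ref{cor:67f} from Theorem~\ref{thm:sa8d7}. Under assumption (\ref{eq:8sa6}) the subsignature is $F$-independent, so $p_M^{(k)}=s_M^{(k)}$ and Theorem~\ref{thm:sa8d7} yields
$$s_M^{(k)} ~=~ \sum_{A\subseteq C}d(A)\,\Pr\Big(T_{k:M}=\min_{i\in A}T_i\Big).$$
The whole task then collapses to evaluating $\Pr(T_{k:M}=\min_{i\in A}T_i)$ for an arbitrary $A\subseteq C$ under (\ref{eq:8sa6}). By Remark~\ref{rem:8s7f} this probability is itself an $M$-signature (of the series structure on $A$), hence also $F$-free, so I would compute it under the convenient simplifying assumption that the lifetimes $T_1,\ldots,T_n$ are i.i.d.\ uniformly distributed on $[0,1]$.

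Write $a=|A|$, $b=|M\setminus A|$ and $c=|M\cap A|=m-b$. I would decompose the event $\{T_{k:M}=\min_{i\in A}T_i\}$ a.e.\ as the disjoint union, over $j\in M\cap A$, of the events on which $T_j$ is simultaneously the minimum of $\{T_i:i\in A\}$ and the $k$th order statistic of $\{T_i:i\in M\}$. For a fixed such $j$, conditioning on $T_j=t$ the event reads \emph{``all $a-1$ elements of $A\setminus\{j\}$ lie above $t$, and exactly $k-1$ of the $b$ elements of $M\setminus A$ lie below $t$''}, the constraints that the $M$-order requirement implicitly places on $(M\cap A)\setminus\{j\}$ being automatically discharged by the $A$-minimum requirement. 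By independence the conditional probability is $(1-t)^{a-1}\binom{b}{k-1}t^{k-1}(1-t)^{b-k+1}$, and integrating against $\dd t$ and applying the beta identity (\ref{eq:beta}) with $p=k-1$, $q=a+b-k$ yields $\binom{b}{k-1}/(k\binom{a+b}{k})$. Summing over the $c=m-b$ elements $j\in M\cap A$ gives
$$\Pr\Big(T_{k:M}=\min_{i\in A}T_i\Big) ~=~ \frac{(m-|M\setminus A|)\binom{|M\setminus A|}{k-1}}{k\binom{|M\setminus A|+|A|}{k}},$$
and substituting this kernel into the Theorem~\ref{thm:sa8d7} formula establishes the second (unrestricted) identity of the corollary.

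For the first identity I would simply observe that the summand already vanishes outside the stated range $k-1\leqslant|M\setminus A|\leqslant m-1$: below the lower bound $\binom{|M\setminus A|}{k-1}=0$, while at $|M\setminus A|=m$ one has $M\cap A=\varnothing$ and so the factor $m-|M\setminus A|=0$.

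The only mildly delicate step is the conditional probability calculation above: one must notice that the constraints on $(M\cap A)\setminus\{j\}$ coming from the $M$-order-statistic condition are redundant once $A\setminus\{j\}$ is forced above $T_j$, so that the conditional event factors cleanly into an $(a-1)$-piece and a $b$-piece with no overlap correction. Once this decoupling is seen, everything else is a routine application of the beta integral together with the standard identity $(a+b)\binom{a+b-1}{k-1}=k\binom{a+b}{k}$.
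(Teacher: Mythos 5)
Your proposal is correct, and it shares the paper's skeleton --- reduce via Theorem~\ref{thm:sa8d7} to evaluating the kernel $\Pr\bigl(T_{k:M}=\min_{i\in A}T_i\bigr)$ under condition (\ref{eq:8sa6}) --- but the evaluation of that kernel is done by a genuinely different route. The paper stays entirely combinatorial: it applies formula (\ref{eq:fgt1}) to the series structure $\phi_A(\bfx)=\prod_{i\in A}x_i$ (via Remark~\ref{rem:8s7f}), splits $A$ into $A_1=A\cap M$ and $A_2=A\setminus M$, and collapses the resulting double sum with two applications of the beta identity (\ref{eq:beta}) and the binomial theorem, arriving at $\frac{|A_1|}{m+|A_2|}\binom{m-|A_1|}{k-1}/\binom{m+|A_2|-1}{k-1}$, which is your kernel after the identity $k\binom{a+b}{k}=(a+b)\binom{a+b-1}{k-1}$ you quote. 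You instead transfer to one convenient distribution (i.i.d.\ uniform lifetimes, which realize (\ref{eq:8sa6})) --- a step you correctly justify by noting, through Remark~\ref{rem:8s7f} and formula (\ref{eq:main}), that the kernel depends on $F$ only through the functions $q_j$ --- and then compute it probabilistically by conditioning on $T_j=t$ for each $j\in M\cap A$, with a single beta integral; your key decoupling observation (the constraints the $M$-order condition places on $(M\cap A)\setminus\{j\}$ are absorbed by the $A$-minimum condition, so the conditional event factors over the disjoint sets $A\setminus\{j\}$ and $M\setminus A$) is exactly right, and so is your vanishing argument (lower range via $\binom{|M\setminus A|}{k-1}=0$, upper via $m-|M\setminus A|=0$) reconciling the restricted and unrestricted sums. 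What each buys: the paper's computation never leaves the abstract hypothesis (\ref{eq:8sa6}) and is purely algebraic, whereas yours is shorter and probabilistically more transparent, at the modest cost of the (properly supplied) transfer argument showing that computing under one $F$ satisfying (\ref{eq:8sa6}) determines the general value.
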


\begin{proof}
For every $A\subseteq C$, let $\phi_A(\bfx)=\prod_{i\in A}x_i$. For every $B\subseteq C$, we then have $\Delta_j\,\phi_A(B)=1$, if $j\in A$ and
$A\setminus\{j\}\subseteq B$, and $0$, otherwise.

Combining Remark~\ref{rem:8s7f} with (\ref{eq:fgt1}), we then obtain
$$
\Pr\Big(T_{k:M}=\min_{i\in A}T_i\Big) ~=~ \sum_{j\in M}\,\sum_{\textstyle{B\subseteq C\setminus\{j\}\atop |M\setminus
B|=k}}\,\frac{1}{n\,{n-1\choose |B|}}\,\Delta_j\,\phi_A(B) ~=~ \sum_{j\in M\cap A}\,\sum_{\textstyle{A\setminus\{j\}\subseteq B\subseteq C\setminus\{j\}\atop |M\setminus B|=k}}\,\frac{1}{n\,{n-1\choose
|B|}}~.
$$
Partitioning $A$ into $A_1=A\cap M$ and $A_2=A\setminus M$ and then using (\ref{eq:beta}) twice and the binomial theorem, the latter expression becomes
\begin{eqnarray*}
\lefteqn{\sum_{j\in A_1}\,\sum_{\textstyle{A_1\setminus\{j\}\subseteq B_1\subseteq M\setminus\{j\}\atop
|B_1|=m-k}}\,\sum_{A_2\subseteq B_2\subseteq C\setminus M}\,\frac{1}{n\,{n-1\choose m-k+|B_2|}}}\\
&=& \sum_{j\in A_1}\,{m-|A_1|\choose m-k-|A_1|+1}\,\sum_{b_2=|A_2|}^{n-m} {n-m-|A_2|\choose b_2-|A_2|}\,\int_0^1 t^{m-k+b_2}\,
(1-t)^{n-1-m+k-b_2}\, dt\\
&=& |A_1|\,{m-|A_1|\choose k-1}\,\int_0^1 t^{m-k+|A_2|}\, (1-t)^{k-1}\, dt ~=~ \frac{|A_1|}{m+|A_2|}\,\frac{{m-|A_1|\choose
k-1}}{{m+|A_2|-1\choose k-1}}\, .
\end{eqnarray*}
We then conclude by Theorem~\ref{thm:sa8d7}.
\end{proof}

From Corollary~\ref{cor:67f} we immediately derive the following result.

\begin{corollary}
Assume that the functions $q_j$ ($j\in C$) satisfy condition (\ref{eq:8sa6}). For every $k\in\{1,\ldots,n\}$ and every $j\in C$, we have
$$
s_k ~= \sum_{\textstyle{A\subseteq C\atop |A|\leqslant n-k+1}}d(A)\,\frac{|A|}{k}\,\frac{{n-|A|\choose k-1}}{{n\choose k}}
$$
and
$$
b_j ~= \sum_{A\subseteq C,\, A\ni j}d(A)\,\frac{1}{|A|}\, .
$$
\end{corollary}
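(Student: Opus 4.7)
The plan is to deduce both identities by specializing the master formula in Corollary~\ref{cor:67f}, first to $M=C$ and then to $M=\{j\}$. Since Corollary~\ref{cor:67f} already expresses $s_M^{(k)}$ as a linear combination of $d(A)$'s under the exchangeability assumption, no new argument is needed beyond bookkeeping.

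For the formula for $s_k$, I would set $M=C$, so that $m=n$ and $s_k=s_C^{(k)}$. For every $A\subseteq C$ one then has $M\setminus A=C\setminus A$, hence $|M\setminus A|=n-|A|$, $m-|M\setminus A|=|A|$, and $|M\setminus A|+|A|=n$. Substituting into the unrestricted form of Corollary~\ref{cor:67f} yields
$$
s_k ~=~ \sum_{A\subseteq C}d(A)\,\frac{|A|}{k}\,\frac{\binom{n-|A|}{k-1}}{\binom{n}{k}}.
$$
The term indexed by $A=\varnothing$ vanishes because of the factor $|A|$, and any term with $|A|>n-k+1$ vanishes since then $n-|A|<k-1$ forces $\binom{n-|A|}{k-1}=0$. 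This justifies restricting the sum to $|A|\leqslant n-k+1$ and matches the stated formula.

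For the formula for $b_j$, I would specialize Corollary~\ref{cor:67f} to $M=\{j\}$, so $m=1$ and the only admissible value of $k$ is $k=1$, with $b_j=s_{\{j\}}^{(1)}$. If $j\in A$, then $M\setminus A=\varnothing$, giving $m-|M\setminus A|=1$, $\binom{0}{0}=1$, and $|M\setminus A|+|A|=|A|$, so the contribution is $d(A)/|A|$. If $j\notin A$, then $|M\setminus A|=1$ and the factor $m-|M\setminus A|$ is zero, killing the term. Summing over the surviving sets produces $b_j=\sum_{A\ni j}d(A)/|A|$, as claimed.

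There is essentially no obstacle beyond carefully evaluating the binomial coefficients and verifying which terms vanish; all the analytic content was already absorbed in the derivation of Corollary~\ref{cor:67f} via the Beta-integral identity~(\ref{eq:beta}).
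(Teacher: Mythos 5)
Your proposal is correct and coincides with the paper's intended argument: the corollary is obtained by direct specialization of Corollary~\ref{cor:67f} to $M=C$ (giving $s_k$, with the vanishing of terms for $|A|>n-k+1$ since $\binom{n-|A|}{k-1}=0$) and to $M=\{j\}$, $k=1$ (giving $b_j$, since only sets containing $j$ survive). The bookkeeping with the binomial coefficients is exactly what the paper leaves implicit when it says the result is ``immediately derived.''
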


\section{Subsignatures associated with modular sets}

It is natural to investigate the concept of $M$-signature in the special case where $M$ is a modular set. In this final section we study this case and show how the $M$-signature is related to the signature of the corresponding module.

Suppose that the system contains a module $(M,\chi)$, where $M\subseteq C$ is the corresponding modular set
and $\chi\colon\{0,1\}^M\to\{0,1\}$ is the corresponding structure function. In this case the structure function of the system expresses through
the composition
\begin{equation}\label{eq:8dsf6}
\phi(\bfx) ~=~ \psi\big(\chi(\bfx^M),\bfx^{C\setminus M}\big)\, ,
\end{equation}
where $\bfx^M=(x_i)_{i\in M}$, $\bfx^{C\setminus M}=(x_i)_{i\in C\setminus M}$. The reduced system (of $n-m+1$ components) obtained from the original system $(C,\phi)$ by considering the modular set $M$ as a single macro-component $[M]$ will be denoted by $(C_M,\psi)$, where $C_M=(C\setminus M)\cup\{[M]\}$ and $\psi\colon\{0,1\}^{C_M}\to\{0,1\}$ is the organizing structure. For general background on modules, see \cite[Chap.~1]{BarPro81}.

As a subsystem, the module $(M,\chi)$ has a lifetime $T_M$, which is defined by
$$
T_M ~=~ \max_{\textstyle{A\subseteq M\atop \chi(A)=1}}\min_{i\in A}T_i{\,}.
$$
Note that $T_M$ is also the lifetime $T_{[M]}$ of component $[M]$ in the reduced system $(C_M,\psi)$. Moreover, it is clear that the event $(T_C=T_j~\mbox{for some}~j\in M)$ coincides with the event $(T_C=T_M)$. From (\ref{eq:probnorm}) it follows that the normalized $M$-signature of the system can then be rewritten as
$$
\frac{p_M^{(k)}}{\sum_{\ell =1}^mp_M^{(\ell)}} ~=~ \Pr(T_C=T_{k:M}\mid T_C=T_M)\, .
$$

The following proposition gives an explicit expression for the probability $\Pr(T_C=T_M)$ in terms of structure $\psi$. We denote by $q_{[M]}^{C_M}\colon 2^{C\setminus M}\to [0,1]$ the relative quality function of component $[M]$ in the reduced system $(C_M,\psi)$. That is,
$$
q_{[M]}^{C_M}(A) ~=~ \Pr\Big(\max_{i\in C\setminus (M\cup A)}T_i<T_{[M]}<\min_{i\in A}T_i\Big),\qquad A\subseteq C\setminus M.
$$

Contrary to functions $q_j$ (which are independent of the structure functions), the function $q_{[M]}^{C_M}$ depends on $T_{[M]}$ and hence on the structure $\chi$ of the module. In particular, it is easy to see that if the components of the module are connected in parallel, then we have
$$
\sum_{j\in M}q_j(A) ~=~ q_{[M]}^{C_M}(A),\qquad A\subseteq C\setminus M.
$$

\begin{proposition}\label{prop:7dsfs}
We have
\begin{equation}\label{eq:7dsfs}
\Pr(T_C=T_M) ~=~ \sum_{A\subseteq C\setminus M} q_{[M]}^{C_M}(A)\,\Delta_{[M]}\psi(A).
\end{equation}
\end{proposition}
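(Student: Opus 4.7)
The plan is to recognize that the right-hand side of (\ref{eq:7dsfs}) is precisely what formula (\ref{eq:a76d5s}) produces when applied to the reduced system $(C_M,\psi)$ with the distinguished component $[M]$, and then to identify the left-hand side with the corresponding Barlow-Proschan probability. In other words, I would reduce the claim to an already-proved theorem by collapsing the module $(M,\chi)$ into a single macro-component.

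First, I would use the modular decomposition (\ref{eq:8dsf6}) to observe that the system lifetime $T_C$ of $(C,\phi)$ coincides (a.e.)\ with the lifetime $T_{C_M}$ of the reduced system $(C_M,\psi)$, when the latter is evaluated on the component lifetimes $(T_i)_{i\in C\setminus M}$ together with $T_{[M]}=T_M$. Since the original c.d.f.\ $F$ has no ties, the minimum (or maximum) of any subfamily of the $T_i$ is a.e.\ attained by a unique index, and in particular $T_M=T_j$ for some $j\in M$ a.e., so $T_M$ almost surely differs from every $T_i$ with $i\in C\setminus M$. Hence the reduced system also has no ties, and the general formalism of Section~1 applies verbatim to $(C_M,\psi)$.

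Next, as noted just before the proposition, the event $(T_C=T_j\text{ for some }j\in M)$ coincides (a.e.)\ with $(T_C=T_M)$; translated to the reduced system, this is the event $(T_{C_M}=T_{[M]})$, i.e.\ the event that the failure of component $[M]$ triggers the failure of $(C_M,\psi)$. Therefore $\Pr(T_C=T_M)=I_{\mathrm{BP}}^{([M])}$ evaluated in the reduced system. Applying formula (\ref{eq:a76d5s}) to $(C_M,\psi)$ with $j=[M]$, and using the identification $C_M\setminus\{[M]\}=C\setminus M$, yields exactly (\ref{eq:7dsfs}). There is no serious obstacle here; the only point that needs a brief verification is that the relative quality function of $[M]$ in the reduced system is the function $q_{[M]}^{C_M}$ defined just above the proposition, which is immediate from the two definitions.
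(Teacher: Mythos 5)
Your proposal is correct and follows essentially the same route as the paper: identify $\Pr(T_C=T_M)=\Pr(T_C=T_{[M]})$ as the $[M]$th coordinate of the Barlow--Proschan index of the reduced system $(C_M,\psi)$ and apply formula (\ref{eq:a76d5s}) to that system. Your extra checks (no ties in the reduced system, matching of $q_{[M]}^{C_M}$ with the relative quality function of $[M]$) are sound but only make explicit what the paper leaves implicit.
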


\begin{proof}
By definition, the probability $\Pr(T_C=T_M)=\Pr(T_C=T_{[M]})$ is the $[M]$th coordinate of the Barlow-Proschan importance index associated with the reduced system $(C_M,\psi)$. The formula then follows from formula (\ref{eq:a76d5s}).
\end{proof}

\begin{example}\label{ex:as8df76}
Consider a $4$-component system whose structure function is given by
$$
\phi(x_1,x_2,x_3,x_4) ~=~ x_1 (x_2\amalg x_3 x_4) = x_1x_2+x_1x_3x_4-x_1x_2x_3x_4
$$
and consider the module $(M,\chi)$, where $M=\{3,4\}$ and $\chi(x_3,x_4)=x_3x_4$. For such a system we have
$$
\psi(x_{[M]},x_1,x_2) ~=~ x_1x_2+x_1x_{[M]}-x_1x_2x_{[M]}
$$
and by (\ref{eq:7dsfs}) we have
\begin{eqnarray*}
\Pr(T_C=T_M) &=& q_{[M]}^{C_M}(\{1\}) ~=~ \Pr(T_2<T_{[M]}<T_1) ~=~ \Pr(T_2<\min\{T_3,T_4\}<T_1)\\
&=& \Pr(T_2<T_3<T_1<T_4) + \Pr(T_2<T_3<T_4<T_1)\\
&&\null + \Pr(T_2<T_4<T_1<T_3) + \Pr(T_2<T_4<T_3<T_1).
\end{eqnarray*}
\end{example}

Since $(M,\chi)$ is a module, it has its own signature; denote it by $\mathbf{p}^M$. For every $k\in\{1,\ldots,m\}$, the $k$th coordinate of $\mathbf{p}^M$ is given by the probability $p_k^M=\Pr(T_M = T_{k:M})$.

It is not difficult to see that the inclusion $(T_C = T_{k:M})\subset (T_M = T_{k:M})$ holds for every $k\in\{1,\ldots,m\}$. From this observation we derive immediately the identity
\begin{equation}\label{eq:7ds56fs}
p_M^{(k)} ~=~ p_k^M\,\Pr(T_C = T_{k:M}\mid T_M = T_{k:M}).
\end{equation}
This equation shows how the $M$-signature of the system can be related to the signature of the module $(M,\chi)$.

We now show that, under certain assumptions (which are satisfied if the components in $M$ have exchangeable lifetimes), the conditional probability in (\ref{eq:7ds56fs}) can be interpreted as a measure of conditional importance of module $(M,\chi)$.

For every $j\in M$ we denote by $q_j^{M}$ the relative quality function of component $j$ in the module $(M,\chi)$. That is,
$$
q_j^M(A) ~=~ \Pr\Big(\max_{i\in M\setminus A}T_i=T_j<\min_{i\in A}T_i\Big),\qquad A\subseteq M\setminus\{j\}.
$$

We observe that, for every $j\in M$, every $A\subseteq M\setminus\{j\}$ such that $q_j^M(A)\neq 0$, and every $B\subseteq C\setminus M$, we have
\begin{equation}\label{eq:mathExp}
\frac{q_j(A\cup B)}{q_j^M(A)} ~=~ \Pr\Big(\max_{i\in (C\setminus M)\setminus B}T_i<T_j<\min_{i\in B}T_i{\,}\Big|{\,}\max_{i\in M\setminus A}T_i=T_j<\min_{i\in A} T_i\Big).
\end{equation}

\begin{theorem}\label{thm:7sa6df}
Assume that we have
$$
\frac{q_j(A\cup B)}{q_j^M(A)} ~=~ \frac{q_{j'}(A'\cup B)}{q_{j'}^M(A')}
$$
for any $j,j'\in M$, any $A\subseteq M\setminus\{j\}$ and $A'\subseteq M\setminus\{j'\}$, such that $|A|=|A'|$ and $q_j^M(A)\neq 0$ and $q_j^M(A')\neq 0$, and any $B\subseteq C\setminus M$. Then, for any $k\in\{1,\ldots,m\}$, any $j\in M$, and any $A\subseteq M\setminus\{j\}$ such that $|A|=m-k$, we have
\begin{equation}\label{eq:86asf}
p_M^{(k)} ~=~ p_k^M\,\sum_{B\subseteq C\setminus M}\frac{q_j(A\cup B)}{q_j^M(A)}{\,}\Delta_{[M]}\psi(B),
\end{equation}
where the coefficient of $\Delta_{[M]}\psi(B)$ is the conditional probability given in (\ref{eq:mathExp}).
\end{theorem}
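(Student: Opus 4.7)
The plan is to start from the second expression in (\ref{eq:main}) of Theorem~\ref{thm:8sf6}, applied to the system $(C,\phi)$, and to decompose each index set in the sum along the partition $C=M\cup(C\setminus M)$. For $j\in M$ and $A\subseteq C\setminus\{j\}$ with $|M\setminus A|=k$, write $A=A'\cup B$ with $A'=A\cap M\subseteq M\setminus\{j\}$ and $B=A\setminus M\subseteq C\setminus M$; the union is disjoint and $|A'|=m-k$.

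The first key step is the modular factorization of the increment. Using (\ref{eq:8dsf6}),
$$\phi(A'\cup B)=\psi(\chi(A'),B)\qquad\text{and}\qquad\phi(A'\cup B\cup\{j\})=\psi(\chi(A'\cup\{j\}),B),$$
so $\Delta_j\phi(A'\cup B)=0$ unless $j$ is critical for $\chi$ at $A'$, in which case $\chi$ flips from $0$ to $1$ and the increment collapses to $\psi(1,B)-\psi(0,B)=\Delta_{[M]}\psi(B)$. In all cases this yields $\Delta_j\phi(A'\cup B)=\Delta_j\chi(A')\,\Delta_{[M]}\psi(B)$, and (\ref{eq:main}) becomes a triple sum over $j\in M$, $A'\subseteq M\setminus\{j\}$ with $|A'|=m-k$, and $B\subseteq C\setminus M$, with summand $q_j(A'\cup B)\,\Delta_j\chi(A')\,\Delta_{[M]}\psi(B)$.

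The second key step is to factor $q_j(A'\cup B)=q_j^M(A')\cdot[q_j(A'\cup B)/q_j^M(A')]$ on the terms where $q_j^M(A')\neq 0$. Terms with $q_j^M(A')=0$ contribute nothing: since $M\setminus A'\subseteq C\setminus(A'\cup B)$ and $j\in M\setminus A'$, the event defining $q_j(A'\cup B)$ entails the event defining $q_j^M(A')$, so $q_j^M(A')=0$ forces $q_j(A'\cup B)=0$. By the hypothesis of the theorem, the ratio depends only on $|A'|=m-k$ and $B$, so it is constant in $j$ and $A'$ and equals the fixed value $q_j(A\cup B)/q_j^M(A)$ from the statement; it therefore pulls out of the sums in $j$ and $A'$.

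Finally I recognize the remaining double sum
$$\sum_{j\in M}\,\sum_{\textstyle{A'\subseteq M\setminus\{j\}\atop |M\setminus A'|=k}}q_j^M(A')\,\Delta_j\chi(A')$$
as the right-hand side of (\ref{eq:main}) applied to the module $(M,\chi)$, which equals $p_k^M$. Combining the factors yields (\ref{eq:86asf}). The main obstacle is not deep but delicate bookkeeping: executing the partition $A=A'\cup B$ cleanly, reconciling the external condition $|M\setminus A|=k$ with the internal $|M\setminus A'|=k$, and disposing of the vanishing-denominator case before invoking the proportionality hypothesis.
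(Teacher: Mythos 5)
Your proposal is correct and follows essentially the same route as the paper: start from formula (\ref{eq:main}), factor the discrete derivative as $\Delta_j\phi(A'\cup B)=\Delta_j\chi(A')\,\Delta_{[M]}\psi(B)$, use the proportionality hypothesis to pull the ratio out of the sums over $j$ and $A'$, and recognize the remaining double sum as $p_k^M$. The only cosmetic differences are that you derive the factorization of $\Delta_j\phi$ directly from the composition (\ref{eq:8dsf6}) rather than via the chain rule applied to the multilinear reliability functions, and you explicitly dispose of the terms with $q_j^M(A')=0$, a point the paper's proof leaves implicit.
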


\begin{proof}
Let $h_{\phi}\colon [0,1]^C\to\R$, $h_{\chi}\colon [0,1]^M\to\R$, and $h_{\psi}\colon [0,1]^{C_M}\to\R$ be the reliability functions of the structures $\phi$, $\chi$, and $\psi$, respectively. That is,
$$
h_{\phi}(\bfx) ~=~ \sum_{A\subseteq C} \phi(A)\, \prod_{i\in A}x_i\,\prod_{i\in C\setminus A}(1-x_i),\qquad h_{\chi}(\bfx) ~=~ \sum_{A\subseteq M} \chi(A)\, \prod_{i\in A}x_i\,\prod_{i\in M\setminus A}(1-x_i),
$$
and
\begin{equation}\label{eq:psi89}
h_{\psi}(\bfx) ~=~ \sum_{A\subseteq C_M} \psi(A)\, \prod_{i\in A}x_i\,\prod_{i\in C_M\setminus A}(1-x_i).
\end{equation}
By (\ref{eq:8dsf6}) we then have
$$
h_{\phi}(\bfx) ~=~ h_{\psi}\big(h_{\chi}(\bfx^M),\bfx^{C\setminus M}\big)\, .
$$
Using the chain rule it follows that, for every $j\in M$,
\begin{equation}\label{eq:5das7f}
\frac{\partial h_{\phi}}{\partial x_j}{\,}(\bfx) ~=~ \frac{\partial h_{\psi}}{\partial x_{[M]}}{\,}\big(h_{\chi}(\bfx^M),\bfx^{C\setminus M}\big)\, \frac{\partial h_{\chi}}{\partial x_j}{\,}(\bfx^M)\, .
\end{equation}
Since any reliability function $h$ is a multilinear polynomial, the partial derivative $\partial h/\partial x_j$ does not depend on variable $x_j$ and coincides with the discrete derivative $\Delta_j h$. From (\ref{eq:5das7f}) it then follows that, for every $A\subseteq M$ and every $B\subseteq C\setminus M$, we have
$$
\Delta_j\,\phi(A\cup B) ~=~ \Delta_{[M]}\,\psi(B)\,\Delta_j\,\chi(A).
$$
Therefore, by (\ref{eq:main}) we obtain
\begin{eqnarray*}
p_M^{(k)} &=& \sum_{\textstyle{A\subseteq M\atop |A|=m-k}}\,\sum_{B\subseteq C\setminus M}\,\sum_{j\in
M\setminus A} q_j(A\cup B)\,\Delta_j\,\phi(A\cup B)\\
&=& \Bigg(\sum_{\textstyle{A\subseteq M\atop |A|=m-k}}\,\sum_{j\in M\setminus
A} q_j^M(A)\,\Delta_j\,\chi(A)\Bigg)\Bigg(\sum_{B\subseteq C\setminus M}\,\frac{q_j(A\cup B)}{q_j^M(A)}\,\Delta_{[M]}\,\psi(B)\Bigg),
\end{eqnarray*}
where the first sum reduces to $p_k^M$ by (\ref{eq:w675re}).
\end{proof}

We can easily observe that the assumptions of Theorem~\ref{thm:7sa6df} hold whenever the component lifetimes are exchangeable. Also, by (\ref{eq:mathExp}) we observe that the sum in (\ref{eq:86asf}) is a mathematical expectation which measures in a sense an importance degree of component $[M]$ in the reduced system $(C_M,\psi)$. Comparing (\ref{eq:7ds56fs}) with (\ref{eq:86asf}) shows that this sum is nothing other than the conditional probability
$$
\Pr(T_C = T_{k:M}\mid T_M = T_{k:M})
$$
whenever it exists (i.e., whenever $p_k^M\neq 0$). Moreover, this sum depends on $k$ but does not depend on the structure $\chi$ of the module. This shows that $p_M^{(k)}$ depends on the structure $\chi$ only through the probability $p_k^M$. In particular, if the components in the module are reorganized so that the probability $p_k^M$ is kept unchanged, then so does the probability $p_M^{(k)}$.

The following result yields integral expressions for the probabilities $\Pr(T_C = T_M)$ and $s_M^{(k)}$ in the exchangeable case.

\begin{corollary}\label{cor:qw87e}
If the function $q_{[M]}^{C_M}$ satisfies condition (\ref{eq:8sa6}), then
\begin{equation}\label{eq:f86dfkj}
\Pr(T_C = T_M) ~=~ \int_0^1 \frac{\partial h_{\psi}}{\partial x_{[M]}}{\,}(t,\ldots,t)\, dt.
\end{equation}
Moreover, if the functions $q_j$ $(j\in M)$ satisfy condition (\ref{eq:8sa6}), then for every $k\in\{1,\ldots,m\}$ we have
\begin{equation}\label{eq:f86dfkj2}
s_M^{(k)} ~=~ s_k^M\,\int_0^1 r_{k,m}(t)\, \frac{\partial h_{\psi}}{\partial x_{[M]}}{\,}(t,\ldots,t)\, dt\, ,
\end{equation}
where $r_{k,m}(t)$ is the p.d.f.\ of the beta distribution on $[0,1]$ with
parameters $\alpha=m-k+1$ and $\beta=k$.
\end{corollary}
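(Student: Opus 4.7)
The plan is to derive both identities from the combinatorial formulas previously established---Proposition~\ref{prop:7dsfs} for (\ref{eq:f86dfkj}) and Theorem~\ref{thm:7sa6df} for (\ref{eq:f86dfkj2})---by invoking the exchangeability condition (\ref{eq:8sa6}), rewriting the resulting rational coefficients as Beta-type integrals via (\ref{eq:beta}), and identifying the inner finite sums as the diagonal value of $\partial h_\psi/\partial x_{[M]}$.

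For (\ref{eq:f86dfkj}), I would start from Proposition~\ref{prop:7dsfs}. Applying (\ref{eq:8sa6}) to the reduced $(n-m+1)$-component system $(C_M,\psi)$ gives $q_{[M]}^{C_M}(A)=1/\bigl((n-m+1)\binom{n-m}{|A|}\bigr)$ for every $A\subseteq C\setminus M$, and by (\ref{eq:beta}) this coefficient equals $\int_0^1 t^{|A|}(1-t)^{n-m-|A|}\,dt$. Pulling the integral out of the finite sum, the integrand becomes $\sum_{A\subseteq C\setminus M}\Delta_{[M]}\psi(A)\,t^{|A|}(1-t)^{n-m-|A|}$, which is precisely $\partial h_\psi/\partial x_{[M]}(t,\ldots,t)$: differentiating (\ref{eq:psi89}) in $x_{[M]}$ and collecting terms by the subset $A\subseteq C\setminus M$ produces exactly this multilinear expansion.

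For (\ref{eq:f86dfkj2}), exchangeability of the lifetimes in $M$---implied by (\ref{eq:8sa6}) restricted to the functions $q_j$, $j\in M$---ensures the hypothesis of Theorem~\ref{thm:7sa6df}. Fixing any $j\in M$ and any $A\subseteq M\setminus\{j\}$ with $|A|=m-k$, the theorem yields $p_M^{(k)}=p_k^M\sum_{B\subseteq C\setminus M}(q_j(A\cup B)/q_j^M(A))\,\Delta_{[M]}\psi(B)$. The crux is the identification $q_j(A\cup B)/q_j^M(A)=\int_0^1 r_{k,m}(t)\,t^{|B|}(1-t)^{n-m-|B|}\,dt$; substituting the exchangeable values reduces the left-hand side to $m\binom{m-1}{k-1}/\bigl(n\binom{n-1}{m-k+|B|}\bigr)$, and the right-hand side reduces to the same quantity after using the explicit form $r_{k,m}(t)=m\binom{m-1}{k-1}t^{m-k}(1-t)^{k-1}$ and one application of (\ref{eq:beta}) with $p=m-k+|B|$ and $q=n-m-|B|+k-1$ (so $p+q=n-1$). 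Swapping the sum over $B$ with the integral and again recognizing the inner sum as $\partial h_\psi/\partial x_{[M]}(t,\ldots,t)$, together with the identifications $p_M^{(k)}=s_M^{(k)}$ and $p_k^M=s_k^M$ valid in the exchangeable case, delivers (\ref{eq:f86dfkj2}).

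The main obstacle is the Beta-integral identification of the ratio $q_j(A\cup B)/q_j^M(A)$ with the weighted integrand involving the density $r_{k,m}$; this is an elementary but careful binomial bookkeeping. Once it is in place, both parts reduce to the same mechanism of pulling an integral out of a finite sum and reading off the multilinear expansion of $\partial h_\psi/\partial x_{[M]}$ on the diagonal.
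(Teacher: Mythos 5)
Your proposal is correct and follows essentially the same route as the paper: (\ref{eq:f86dfkj}) from Proposition~\ref{prop:7dsfs} via the Beta identity (\ref{eq:beta}) and the diagonal expansion of $\partial h_{\psi}/\partial x_{[M]}$, and (\ref{eq:f86dfkj2}) from Theorem~\ref{thm:7sa6df} by identifying $q_j(A\cup B)/q_j^M(A)$ with $\int_0^1 r_{k,m}(t)\,t^{|B|}(1-t)^{n-m-|B|}\,dt$ and swapping sum and integral. One wording caveat only: condition (\ref{eq:8sa6}) for the $q_j$, $j\in M$, does not imply exchangeability of the lifetimes in $M$ (the implication runs the other way), but it does yield the hypothesis of Theorem~\ref{thm:7sa6df}, since your own computation (using $q_j^M(A)=\sum_{B\subseteq C\setminus M}q_j(A\cup B)=1/\bigl(m\binom{m-1}{|A|}\bigr)$) shows the ratio depends only on $|A|$ and $B$, so the argument stands.
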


\begin{proof}
From (\ref{eq:psi89}) we derive (see \cite{Owe72} for details)
\begin{equation}\label{eq:as65d}
\frac{\partial h_{\psi}}{\partial x_{[M]}}(t,\ldots,t) ~=~ \sum_{B\subseteq C_M\setminus\{[M]\}}t^{|B|}(1-t)^{n-m-|B|}\,\Delta_{[M]}\,\psi(B).
\end{equation}
By (\ref{eq:beta}) we then have
$$
\int_0^1 \frac{\partial h_{\psi}}{\partial x_{[M]}}{\,}(t,\ldots,t)\, dt ~=~ \sum_{B\subseteq C\setminus M}\frac{1}{(n-m+1){n-m\choose |B|}}\,\Delta_{[M]}\,\psi(B){\,},
$$
which, combined with (\ref{eq:7dsfs}), proves (\ref{eq:f86dfkj}).

Let us now prove (\ref{eq:f86dfkj2}). For any $j\in M$, any $A\subseteq M\setminus\{j\}$ such that $|A|=m-k$, and any $B\subseteq C\setminus M$, by (\ref{eq:beta}) we have
$$
\frac{q_j(A\cup B)}{q_j^M(A)} ~=~ \frac{m{m-1\choose m-k}}{n{n-1\choose m-k+|B|}} ~=~ \frac{\int_0^1t^{m-k+|B|}\, (1-t)^{k-1+n-m-|B|}\, dt}{\int_0^1t^{m-k}\,
(1-t)^{k-1}\, dt}{\,}.
$$
Setting $r_{k,m}(t) = t^{m-k}(1-t)^{k-1}/\int_0^1u^{m-k}\, (1-u)^{k-1}\, du$, by (\ref{eq:as65d}) the sum in (\ref{eq:86asf}) then becomes
$$
\int_0^1 r_{k,m}(t)\, \frac{\partial h_{\psi}}{\partial x_{[M]}}{\,}(t,\ldots,t)\, dt
$$
and we can conclude by Theorem~\ref{thm:7sa6df}.
\end{proof}

\section*{Acknowledgments}

The author wish to thank Pierre Mathonet for fruitful discussions. This research is supported by the internal research project F1R-MTH-PUL-12RDO2 of the University of Luxembourg.

\end{document}